\theoremstyle{plain}
\newtheorem{thm}{Theorem}[section]
\newtheorem{prop}[thm]{Proposition}
\theoremstyle{definition}
\theoremstyle{remark}
\newtheorem{rks}[thm]{Remarks}
\numberwithin{equation}{section}
\newcommand{\Z}{\mathbb{Z}}
\newcommand{\Q}{\mathbb{Q}}
\newcommand{\C}{\mathbb{C}}
\newcommand{\PP}{\mathbb{P}}
\DeclareMathOperator{\Hom}{Hom}
\newcommand\enet[1]{\renewcommand\theenumi{#1}
\renewcommand\labelenumi{\theenumi}}
\begin{document}

\title [On fundamental groups of plane curve complements]
{On fundamental groups of plane curve complements}

\author[E. Artal]{Enrique Artal Bartolo$^1$}
\address{Departamento de Matem\'aticas, IUMA\\ 
Universidad de Zaragoza\\ 
C.~Pedro Cerbuna 12\\ 
50009 Zaragoza, Spain} 
\email{artal@unizar.es} 

\author[Alexandru Dimca]{Alexandru Dimca$^2$}
\address{Univ. Nice Sophia Antipolis, CNRS,  LJAD, UMR 7351, 06100 Nice, France. }
\email{dimca@unice.fr}

\thanks{$^1$ Partially supported by
MTM2013-45710-C2-1-P. $^2$ Partially supported by Institut Universitaire de France.}

\subjclass[2010]{Primary 14F35, 32S40; Secondary 55P15}

\keywords{Algebraic curve, fundamental group, Alexander polynomial, homotopy type}

\begin{abstract} 
In this paper we discuss some properties of fundamental groups and Alexander polynomials of plane curves.
We discuss the relationship of the non-triviality of Alexander polynomials and the notion of (nearly) freeness for irreducible plane curves.
We reprove and restate in modern terms a somewhat forgotten result of Zariski. Finally, we 
describe some topological  properties of curves with abelian fundamental group.
\end{abstract}
 
\maketitle


\section*{Introduction} 
Let $C:f=0$ be a reduced  curve in the complex projective plane $\PP^2$ defined by a homogeneous polynomial $f \in S=\C[x,y,z]$
of degree $d$. The study of the fundamental group of the complement 
$$U=\PP^2 \setminus C$$
goes back to Zariski and has played a central role in the development of Algebraic Topology and Algebraic Geometry.
In this note we consider only some  special topics related to this vast subject. Consider the Milnor fiber associated to the curve $C$, namely the smooth affine surface defined in $\C^3$ by
$$F: f(x,y,z)-1=0,$$
and which is nothing else but a cyclic $d$-fold covering of $U$
(the unique one if $C$ is irreducible). The monodromy transformation $h:F \to F$ given by 
$$h(x,y,z)= \exp(2 \pi i/d) \cdot (x,y,z)$$
corresponds to the natural generator of the deck transformation group of the covering $p:F \to U$,
and induces  monodromy operators $h^*:H^k(F,\C) \to H^k(F,\C)$ for $k=0,1,2$. Note that
$H^k(U, \C)=H^k(F,\C)=0$ for $k>2$ since both $U$ and $F$ are affine varieties of dimension 2.
These monodromy operators are semisimple since $h^d=1_{F}$, and hence one has direct sum decompositions in terms of eigenspaces
$$H^k(F,\C)=\bigoplus_{\lambda\in\mu_d} H^k(F,\C)_{\lambda}$$
where $\mu_d=\{z \in \C: ~ ~ z^d=1\}$. Moreover one has $H^k(F,\C)_1=H^k(U,\C)$, since clearly $U=F/\mu_d$.

In \S\ref{sec:cuspidal} we give an example of a rational cuspidal curve $C$ such that 
the corresponding cohomology group $H^1(F) \ne 0$. This is related to a recent result by U. Walther  \cite{Wa} and the
second named author's
study of nearly free curves in $\PP^2$  \cite{DStNF}.

In \S\ref{sec:zariski} we reprove a classical theorem due to Zariski  \cite{Zar}, 
saying that a cyclic covering of $\PP^2$ of degree $p^a$ with $p$ prime and ramified along an irreducible curve $C$ has zero geometric genus.

In the final section \S\ref{sec:abelian} we consider the properties of the Milnor fiber $F$ for a curve with an abelian $\pi_1(U)$.

The authors 
thank David Massey for useful discussions relating to this subject and Gabriel Sticlaru for help with some computations.

\section{Cuspidal curves}\label{sec:cuspidal} 

In a recent preprint \cite{Wa}, U. Walther has shown 
that we have an { injection}
$$N(f)_{2d-2-j} \to H^2(F, \C)_{\lambda_j},$$
for $j=1,2,...,d$,  where the subscript $\lambda_j$ indicates 
the {eigenspace of the monodromy action} corresponding to the eigenvalue $\lambda_j= \exp(2\pi i (d+1-j)/d)$. 
Here $N(f)=I_f/J_f$, where $J_f$ is the Jacobian ideal of $f$, i.e. 
the ideal in $S$ spanned by the partial derivatives $f_x,f_y,f_z$ and $I_f$ 
is the saturation of the ideal $J_f$ with respect to the ideal $(x,y,z)$.
When $C$ is a rational cuspidal curve, the fundamental group $\pi_1(U)$ can 
be rather complicated, see e.g.~\cite{A,Deg,U}. 
However, at least for rational cuspidal curves  with $d$ even, the second named author and Sticlaru
showed in \cite{DStNF}
that $N(f)$ is rather small, namely $\dim N(f)_k \leq 1$ for all $k$, 
which by definition tells us that $C$ is nearly free, see  \cite{DStNF}. 
In proving this result, it was used the fact that $H^1(F)_{-1}=0$ for any cuspidal curve;
in fact, as it will be stated in \S\ref{sec:zariski} this fact holds for any irreducible
plane curve and for any root of unity whose order is a prime power.
This raises the question whether $H^1(F)=0$ in such cases. We show that this is not the case.

For a plane curve $C:f=0$ we define the $k$-th Alexander polynomial by
$$\Delta_k(C)(t)=\det (tId-h^*|H^k(F,\C)).$$
We say that $\Delta_k(C)$ is trivial, and set $\Delta_k(C)(t)=1$, if $H^k(F,\C)=0$.
\begin{prop}\label{prop:curve}
\label{prop1} Consider the curve of degree $d=6$ given by
$$C: f=(y^2 z-x^3)^2-x^3 y^3.$$
Then the following hold.

\begin{enumerate}
\enet{\rm(\roman{enumi})} 
\item The curve $C$ is rational cuspidal with two  cusps, one is of type $A_2$ (i.e. an ordinary cusp), 
and the other one is a two-Puiseux pair cusp  with Tjurina number 16 and Milnor number 18. 

\item The curve $C$  is nearly free with exponents $d_1=d_2=3$.
\item The curve $C$ has a non-trivial Alexander polynomial $\Delta_1(C)$, more precisely
$$
\Delta_1(C)(t)=t^2-t+1.$$
\end{enumerate}
\end{prop}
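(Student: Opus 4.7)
Each of the three assertions is computational in nature, and I would verify them in sequence.

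For part (i), I would first determine the singular locus of $C$. Computing $f_z = 2y^2(y^2z - x^3)$, $f_x = -3x^2[2(y^2z - x^3) + y^3]$, and $f_y = y[4z(y^2z - x^3) - 3x^3 y]$, a short case analysis pins the singular locus down to $P_1 = [0{:}1{:}0]$ and $P_2 = [0{:}0{:}1]$. In the chart $y = 1$ near $P_1$, the local equation $(z-x^3)^2 - x^3$ has initial form $z^2 - x^3$, identifying $P_1$ as an $A_2$ cusp with $\delta = 1$. In the chart $z = 1$ near $P_2$, the local equation $(y^2 - x^3)^2 - x^3y^3$ is unibranch with two Puiseux pairs, and a direct computation of the local algebra yields $\mu = \dim_{\C} \C\{x,y\}/(f_x, f_y) = 18$ and $\tau = \dim_{\C} \C\{x,y\}/(f, f_x, f_y) = 16$, so $\delta = \mu/2 = 9$. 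Since $\delta_1 + \delta_2 = 1 + 9 = \binom{d-1}{2}$ and both singularities are unibranch, the genus formula forces the geometric genus of $C$ to be zero, so $C$ is rational cuspidal.

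For part (ii), I would compute the module of Jacobian syzygies $D_0(f) = \{(a,b,c) \in S^3 : a f_x + b f_y + c f_z = 0\}$ in low degree. The aim is to show that it admits three minimal generators in degree $3$ with exactly one first syzygy, since by the criterion of \cite{DStNF} this is the characterization of nearly-freeness with exponents $(d_1,d_2) = (3,3)$. Exhibiting three explicit syzygies in degree $3$ and checking via the Hilbert function of $S/J_f$ that no syzygies exist in degree $\leq 2$ completes the verification; this is a routine computation in Singular or Macaulay2.

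For part (iii), the key reduction is Zariski's theorem recalled in \S\ref{sec:zariski}: for an irreducible plane curve $C$, one has $H^1(F,\C)_\lambda = 0$ for every root of unity $\lambda$ whose order is a prime power. For $d=6$ this eliminates every sixth root of unity of order $1$, $2$, or $3$, leaving only the primitive sixth roots $\zeta^{\pm 1} = e^{\pm i\pi/3}$, which are precisely the roots of $t^2 - t + 1$. Hence $\Delta_1(C)(t)$ is a power of $t^2 - t + 1$. To pin down the multiplicity I would combine Walther's injection $N(f)_{2d-2-j} \hookrightarrow H^2(F,\C)_{\lambda_j}$, which for $d = 6$ brings in the graded pieces $N(f)_4$ and $N(f)_8$ (both known explicitly from the nearly-free structure in (ii)), with the monodromy zeta function identity $\zeta_h(t) = (t-1)\Delta_2(C)(t)/\Delta_1(C)(t)$, whose left-hand side is determined by $\chi(F)$ and hence by the singularity data of $C$.

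The main obstacle is the last step of part (iii): Walther's theorem controls $H^2$, whereas the target $\Delta_1$ concerns $H^1$. Bridging this gap requires either a Poincar\'e--Lefschetz type duality argument on the smooth affine surface $F$ (interchanging $\lambda$ and $\lambda^{-1}$) or an independent check that $t^2 - t + 1$ appears in the product of the local Alexander polynomials of the two cusps with positive contribution, so as to force $\Delta_1(C)(t) = t^2 - t + 1$ rather than $1$ or a higher power. The precise bookkeeping of $\dim N(f)_4$ and $\dim N(f)_8$, together with the local monodromy at $P_2$, is where all the delicate input resides.
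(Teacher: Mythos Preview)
Your treatment of (i) and (ii) matches the paper's: both are direct computations, and your explicit sketch of the singular locus and the syzygy check is exactly what the paper summarizes by ``a direct computation, using a computer software as \texttt{Singular} or \texttt{Sagemath}''.

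For (iii), however, your route diverges from the paper's and, as you yourself flag, does not close. The tools you assemble --- Proposition~\ref{prop2}, Walther's injection $N(f)_{2d-2-j}\hookrightarrow H^2(F,\C)_{\lambda_j}$, the monodromy zeta function, and Libgober's divisibility --- all constrain $\Delta_1(C)$ from above or relate $\Delta_1$ to $\Delta_2$, but none of them produces a \emph{lower} bound on $b_1(F)$. Libgober's theorem says only that $\Delta_1(C)$ \emph{divides} the product of the local Alexander polynomials, so checking that $t^2-t+1$ occurs in that product does not prevent $\Delta_1(C)=1$. Likewise, Walther's injection bounds $\dim H^2(F)_\lambda$ from below, and the zeta function fixes the difference $\dim H^2(F)_\lambda-\dim H^1(F)_\lambda$; to turn this into $\dim H^1(F)_\lambda\geq 1$ you would have to show that the Walther bound strictly exceeds that difference, which you have not done (and which would require computing both $N(f)_4,\,N(f)_8$ and the equivariant Euler characteristic explicitly).

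The paper sidesteps this entirely by importing the nontriviality from outside: it cites \cite{ACO} and \cite{deg:dess} for the computation $\pi_1(U)\cong(\Z/2\Z)*(\Z/3\Z)$. Since $C$ is irreducible, $\pi_1(F)=[\pi_1(U),\pi_1(U)]$, and for this free product the abelianized commutator gives $H_1(F,\Z)$ of rank~$2$, hence $H^1(F,\C)=\C^2$. Knowing $\deg\Delta_1(C)=2$, Libgober's divisibility together with the explicit local Alexander polynomials (the paper records that the second cusp has characteristic polynomial $\frac{(t^{15}+1)(t^6+1)}{(t^2+1)(t+1)}$, whose only factor with roots of order dividing~$6$ is $t^2-t+1$) forces $\Delta_1(C)(t)=t^2-t+1$. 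So the decisive input you are missing is precisely the fundamental-group computation; once you have $b_1(F)=2$, your use of Proposition~\ref{prop2} (or, equivalently, the local divisibility) finishes the argument in one line.
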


Similar examples involving free irreducible curves can also be constructed, see Remark \ref{rk:curve} below.

\begin{proof}

The first two claims follow by a direct computation,
using a computer software as \texttt{Singular} or \texttt{Sagemath} for instance. 
We  recall that the exponents of a nearly free curve $C$ of degree $d$ are the unique pair of positive integers
$d_1 \leq d_2$ such that $d_1+d_2=d$ and $\tau(C)=(d-1)^2-d_1(d_2-1)-1$, where $\tau(C)$ is the total Tjurina number of $C$, see for more details  \cite{DStNF}.

The third claim follows from~\cite{ACO} or \cite{deg:dess} 
where it has been proved that $\pi_1(U)$, $U:=\mathbb{P}^2\setminus C$,
is isomorphic to $(\Z / 2\Z ) * (\Z / 3\Z)$.

For any irreducible curve $C$ one has 
$[\pi_1(U),\pi_1(U)] =\pi_1(F)$, see for instance \cite[Corollary (4.1.10), p. 104]{D1},
where $F$ is the Milnor fiber of~$C$.
The equality $\C^2=H^1(F, \C)$ is well known, see for instance \cite{zr:29} or \cite[Theorem (6.4.9) on p.210]{D1}.
By~\cite{li:82}, see also \cite[Theorem (6.4.13) , p. 215]{D2}, $\Delta_1(t)$ divides the product of the Alexander polynomials of the singular points of~$C$
and all its roots are of order a divisor of~$6$. Recall that the Alexander polynomial of 
the singular point of type~$A_2$ equals $t^2-t+1$; it is not hard to compute (either by hand or using \texttt{Singular})
that the Alexander polynomial
of the cusp with Milnor number~$18$ is 
$$
\frac{(t^{15}+1)(t^6+1)}{(t^2+1)(t+1)}.
$$ 
The result follows immediately.
\end{proof}

\begin{rks}  \label{rk:curve} \mbox{}
\begin{enumerate}
 \item The curve in Proposition~\ref{prop:curve} is part of a family of rational cuspidal curves
with non-trivial Alexander polynomial which are constructed in~\cite{ACO}.
\item Degtyarev~\cite[Theorem B.13]{deg:dess} also computes the fundamental group of the curve in Proposition~\ref{prop:curve}
and shows that it is the only torus curve with a point of multiplicity at least~$4$. 
Note that the curves with equation
$$
(y^2 z-x^3)^2-(x-t y)^3 y^3
$$
have the same topology but distinct projective equivalence types. All of these curves are nearly free with exponents $d_1=d_2=3$.
\item Any other rational cuspidal torus curve must have a non-simple triple point and its fundamental group should coincide
with $\Z/2*\Z/3$, see~\cite[Theorem~7.45]{deg:dess}. It is not hard to check that
only two possible examples exist.
\begin{itemize}
 \item A family of curves with an ordinary cusp and a singularity of type~$E_{18}$:
$$
(xz-y^2)^3-x^2(t(x z-y^2)-x y)^2
$$
The ordinary cuspidal point is $[1:0:0]$ while the other one is $[0:0:1]$; its topological type is the one
of $u^3-v^{10}$, with Milnor number~$18$, but its Tjurina number is~$16$, for $t\neq 0$, and $17$ for $t=0$. For $t \ne 0$ this curve is nearly free with exponents $d_1=d_2=3$, while for $t=0$ we get a free curve with exponents $d_1=2$, $d_2=3$. We  recall that the exponents of a  free curve $C$ of degree $d$ are the unique pair of positive integers
$d_1 \leq d_2$ such that $d_1+d_2=d-1$ and $\tau(C)=(d-1)^2-d_1d_2$.
\item A curve with an $E_6$ and a singularity of type~$E_{14}$:
$$
(xz-y^2)^3-x^2 y^4
$$
The $E_6$ point is $[1:0:0]$ while the $E_{14}$ is $[0:0:1]$; its topological type is the one
of $u^3-v^8$, with Milnor number~$14$, but its Tjurina number is~$13$. This is  a free curve with exponents $d_1=2$, $d_2=3$.
\end{itemize}

\item In~\cite{ACO}, the curve in Proposition~\ref{prop:curve} is used to prove
(without using long or numerical computations) that for a generic torus
curve~$\tilde{C}$ (i.e. with equation $f_2^3+f_3^2=0$, $f_j$ of degree~$j$)
we have that $\pi_1(\tilde{U})$, $\tilde{U}=\PP^2 \setminus\tilde{C}$, 
is isomorphic to $(\Z / 2\Z ) * (\Z / 3\Z)$. This corresponds to the case of a Zariski sextic curve with 6 cusps on a conic.
The key point is the (easy) computation of $\pi_1(U)$ and \cite[Corollary (4.3.2) on p.121]{D1}. 
\end{enumerate}

\end{rks}

\section{On Zariski's result on multiple planes}\label{sec:zariski}

Zariski has shown in \cite{Zar} that a (smooth model of a) multiple plane of degree $p^a$ with $p$ a prime number 
and ramified along an irreducible curve $C \subset \PP^2$ has zero first Betti number and zero geometrical genus. 
In this section we prove the following equivalent form of this statement, combining ideas of Zariski and 
the modern approach involving monodromy and Wang sequences.

\begin{prop}
\label{prop2} Let $C:f=0$ be an irreducible plane curve and $\lambda$ a root of unity of order $p^a$ with $p$ prime and $a \geq 1$.
Then $\lambda$ is not a root of the Alexander polynomial $\Delta_1(C)(t)$.
\end{prop}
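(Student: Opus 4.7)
The plan is to reduce Proposition~\ref{prop2} to Zariski's classical theorem that any desingularization of a $p^a$-fold cyclic cover of $\PP^2$ branched along an irreducible curve has vanishing first Betti number. Set $n = p^a$, the order of $\lambda$, and $d = \deg f$. Since $h^d = \id_F$, every eigenvalue of $h^*$ on $H^1(F, \C)$ lies in $\mu_d$; if $n \nmid d$ the statement is automatic, so assume $n \mid d$. Because $C$ is irreducible, $H_1(U,\Z) \simeq \Z/d\Z$, and the unique connected unramified $n$-fold cyclic cover of $U$ is concretely $V := F/\mu_{d/n}$ (note that the $\mu_d$-action on $F$ is free). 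Decomposing $H^1(F,\C)$ under the residual $\mu_{d/n}$-action gives
$$H^1(V,\C) = \bigoplus_{\mu \in \mu_n} H^1(F,\C)_\mu ,$$
so it suffices to show the $\lambda$-eigenspace of the remaining $\mu_n$-action on $H^1(V,\C)$ vanishes. This first step is essentially Wang-sequence bookkeeping.

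Next I would compactify. Let $\bar V \to \PP^2$ denote the normal $n$-fold cyclic cover branched along $C$ that extends $V\to U$, and let $\pi\colon Y\to \bar V$ be a resolution of its (isolated, normal) singularities. Then $Y$ is a smooth projective surface with a $\mu_n$-action, and the boundary $D := Y\setminus V$ consists of the strict transform of the preimage of $C$ together with the $\pi$-exceptional tree of rational curves. Using the long exact sequence of the pair $(Y,D)$, together with Deligne's weight filtration — the classes of the components of $D$ contribute only to the weight-two piece, and the map $H^1(Y,\C)\to H^1(V,\C)$ is a $\mu_n$-equivariant injection onto the weight-one part — one sees that each non-trivial $\mu_n$-isotypic component of $H^1(V,\C)$ embeds into $H^1(Y,\C)$. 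In particular, $H^1(F,\C)_\lambda \hookrightarrow H^1(Y,\C)$.

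The remaining, and main, obstacle is Zariski's theorem itself: for such a resolution $Y$ of a $p^a$-fold cyclic cover of $\PP^2$ branched along an irreducible curve, $H^1(Y,\Q)=0$. This is the only point at which the prime-power hypothesis enters. One modern way to prove it is via the Esnault--Viehweg / Libgober expression of the irregularity $h^1(\OO_Y)$ as a sum, over the non-trivial characters of $\mu_n$, of superabundance dimensions $\dim H^1(\PP^2,\I_k(a_k))$ for adjoint-type ideal sheaves $\I_k$ supported on the singularities of $C$; a congruence argument specific to prime-power $n$, of the type Zariski already used in \cite{Zar}, forces each summand to vanish. Alternatively one can follow Zariski's original Bertini/pencil approach directly on $\bar V$. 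Combining the three steps gives $H^1(F,\C)_\lambda = 0$, so $\lambda$ is not a root of $\Delta_1(C)(t)$.
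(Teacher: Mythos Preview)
Your strategy---reduce to Zariski's classical theorem on $p^a$-fold multiple planes by comparing $H^1(F,\C)_\lambda$ with $H^1$ of a resolved branched cover---is sound, and the paper itself records (Remark~\ref{rkZariski}\,(ii)) that Proposition~\ref{prop2} is equivalent to Zariski's result. But the paper's own proof takes a completely different and far more elementary route that never touches branched covers, resolutions, or mixed Hodge theory. From the Wang sequence of the Milnor fibration $F\to S^5\setminus K\to S^1$ in \emph{integral} homology, together with $H_1(S^5\setminus K,\Z)\cong\Z$ (irreducibility of $C$), one reads off that $h_*-\id$ is surjective on $H_1(F,\Z)$; passing to the free quotient this forces $\Delta_1(C)(1)=\pm1$. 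Since $\Delta_1(C)$ is a product of cyclotomic polynomials $\Phi_k$ with $k\mid d$, and $\Phi_{p^a}(1)=p$, the factor $\Phi_{p^a}$ cannot appear. The prime-power hypothesis thus enters through a single arithmetic identity, and the whole proof is a few lines.

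One step in your argument deserves justification. From the injection $H^1(Y,\C)\hookrightarrow H^1(V,\C)$ onto $W_1$ you infer that each non-trivial $\mu_n$-eigenspace of $H^1(V,\C)$ embeds back into $H^1(Y,\C)$; this needs $\mathrm{Gr}^W_2H^1(V,\C)_\lambda=0$ for $\lambda\neq 1$, which you have not argued. In fact $\mathrm{Gr}^W_2H^1(V,\C)=0$ outright: in the Gysin sequence the exceptional components have linearly independent classes in $H^2(Y,\Q)$ (negative-definite intersection form), and the single strict transform of the ramification curve is not in their span (pair against the pullback of a line from $\PP^2$), so the cycle map $\bigoplus_i\C[D_i]\to H^2(Y,\C)$ is injective. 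With this filled in, your reduction is correct; its cost is that it still rests on Zariski's theorem (or an Esnault--Viehweg style reproof) as a substantial black box, whereas the paper's argument is self-contained.
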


\begin{proof}
Let $Y$ be the affine cone over $C$ 
in $\mathbb{C}^3$
and $K =S^5 \cap Y$ be the link of the non-isolated singularity $(Y,0)$.
Then the Milnor fibration $F \to S^5 \setminus K \to S^1$ gives rise to a Wang exact sequence in homology with integer 
coefficients (which are not written in order to keep the notation light)
$$ \cdots \to H_2(S^5 \setminus K) \to H_1(F) \to H_1(F) \to H_1(S^5 \setminus K) \to H_0(F) \to H_0(F) \to \cdots$$
where the morphisms $H_i(F) \to H_i(F)$ are given by $h_*-Id$ for $i=0,1$. 
The Milnor fiber is connected, see~\cite[Proposition (3.2.3) on p. 76]{D1}, and hence 
the last map is
\begin{equation}\label{eq:0}
\mathbb{Z}\equiv  H_0(F)\overset{0}{\longrightarrow} H_0(F) \equiv\mathbb{Z}.
\end{equation}

On the other hand, $H_1(S^5 \setminus K)=H_1(\C^3 \setminus Y)=\Z$, since $C$ is irreducible,
see \cite[Corollary (4.1.4) on p. 103]{D1}. As the morphism $H_1(S^5 \setminus K) \to H_0(F)$ is surjective 
by \eqref{eq:0}, it follows that it is also injective, i.e. the morphism
$$u=h_*-Id:H_1(F) \to H_1(F)$$
is surjective. Let $T \subset H_1(F)$ denote the subgroup of torsion elements and note that $u(T) = T$. 
It follows that $u$ induces 
an isomorphism $v: L \to L$, where $L=H_1(F)/L$ is a free $\Z$-module. 
Note that $H^1(F,\C)=\Hom(H_1(F),\C)=\Hom(L,\C)$ 
If we compute the determinant of $h^*-Id$ with respect to 
a basis $B^*$ we get 
$\pm 1$. It follows that
$$\Delta_1(C)(1)=\pm 1.$$
Since $h^*$ is defined over $\Z$ and has as eigenvalues only $d$-th roots of unity, it follows that the characteristic polynomial $\Delta_1(C)(t)$ is a product of cyclotomic polynomials $\Phi_k$ for $k$ a divisor of $d$, possibly with repeated factors. Then $\lambda$ is  a root of the Alexander polynomial $\Delta_1(C)(t)$ if and only if the factor $\Phi_{p^a}$ is present in the above product.
But it is well known that 
$$\Phi_{p^a}(t)=\frac{t^{p^a}-1}{t^{p^{a-1}}-1}$$
and hence in particular $\Phi_{p^a}(1)=p$. But this is impossible since $p$ does not divide
$\Delta_1(C)(1)=\pm 1.$
\end{proof}

\begin{rks}
\label{rkZariski}
\mbox{}

\begin{enumerate}
\enet{(\roman{enumi})} 
\item Both hypothesis are necessary in the above result. As Zariski has already remarked in \cite{Zar},
the hypothesis that the order of $\lambda$ is a prime power cannot be weakened, e.g. 
the sextic with six cusps on a conic from the previous section has eigenvalues of order 6 for $h^*$ acting on $H^1(F,\C)$. 
The irreducibility of $C$ cannot be dropped either, since the line arrangement
$$(x^3-y^3)(x^3-z^3)(y^3-z^3)=0$$
has eigenvalues of order 3 for $h^*$ acting on $H^1(F,\C)$, see for instance \cite{DProc}.

\item The equivalence of Zariski's result and Proposition \ref{prop2} 
can be seen using the discussion in \cite[p. 206]{D1} about the computation 
of Alexander polynomials using cyclic coverings and the discussion in \cite[p. 218]{D1} about the 
relation between the topology of a normal surface and of its smooth model.

\end{enumerate}

\end{rks}

\section{Curves with abelian \texorpdfstring{$\pi_1(U)$}{pi1(U)}}\label{sec:abelian} 

In this section we consider curves $C:f=0$ such that the fundamental group of the 
complement $\pi_1(U)$ is abelian and see which are the consequences for the topology 
of the corresponding Milnor fiber. Note that $\pi_1(U)$ is abelian when $C$ is a nodal curve, 
see~\cite{De,F}, but also in many other cases as for instance those listed in M. Ulud\u ag's Ph. D. thesis \cite{U}.
There are also some other conditions stated by Nori~\cite{nr:83}.

\begin{prop}
\label{prop3} 
Let $C:f=0$ be a reduced plane curve of degree~$d$ such that $\pi_1(U)$ is abelian. Then the following hold.

\begin{enumerate}
\enet{\rm(\roman{enumi})}
\item\label{prop3i} $\pi_1(F)$ is abelian, more precisely $\pi_1(F)=H_1(F,\Z)=\Z^{r-1}$ , 
where $r$ is the number of irreducible components of $C$. Moreover, $\pi_1(\C^3 \setminus Y)=H_1(\C^3 \setminus Y,\Z)=\Z^r$,
where $Y$ is the affine cone over $C$.

\item\label{prop3ii} If $C$ is irreducible, then the Milnor fiber $F$ is homotopy equivalent to a bouquet of $2$-dimensional spheres $S^2$.

\item\label{prop3iii} The action of the monodromy on $H^1(F,\Z)$ is the identity, hence 
$$\Delta_1(C)(t)=(t-1)^{r-1}.$$

\end{enumerate}
\end{prop}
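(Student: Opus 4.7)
The organizing idea is that, when $\pi_1(U)$ is abelian, the $\mu_d$-Galois covering $p\colon F\to U$ is as well-behaved as one could hope: the deck transformation acts on $\pi_1(F)$ by conjugation inside $\pi_1(U)$, hence it acts trivially. This single observation will drive all three parts.

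For part~\ref{prop3i}, I would first recall that $\pi_1(F)$ sits inside $\pi_1(U)$ as the kernel of the surjection $\pi_1(U)\twoheadrightarrow\mu_d$ classifying the covering; it is therefore abelian, and by Hurewicz $\pi_1(F)=H_1(F,\Z)$. The geometric monodromy $h$ is a generator of the deck group, so $h_*$ on $\pi_1(F)$ is conjugation in $\pi_1(U)$ by any lift of $h$; since $\pi_1(U)$ is abelian this conjugation is trivial, and hence $h_*=\id$ on $H_1(F,\Z)$. Plugging this into the integral Wang sequence used in the proof of Proposition~\ref{prop2},
$$H_1(F)\xrightarrow{h_*-\id}H_1(F)\longrightarrow H_1(\C^3\setminus Y)\longrightarrow H_0(F)\xrightarrow{0}H_0(F),$$
together with $H_1(\C^3\setminus Y,\Z)=\Z^r$ from \cite[Corollary (4.1.4)]{D1}, the sequence collapses to a short exact sequence
$$0\to H_1(F,\Z)\to \Z^r\to \Z\to 0,$$
which splits and forces $H_1(F,\Z)=\Z^{r-1}$ (torsion-free, being a subgroup of $\Z^r$). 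To obtain $\pi_1(\C^3\setminus Y)=\Z^r$, I would use the global Milnor fibration $F\hookrightarrow\C^3\setminus Y\xrightarrow{f}\C^*$ whose homotopy exact sequence presents $\pi_1(\C^3\setminus Y)$ as an extension of $\Z$ by $\pi_1(F)$; since $\Z$ is free the extension splits, and the triviality of $h_*$ on $\pi_1(F)$ makes it a direct product $\Z^{r-1}\times\Z=\Z^r$, which is its own abelianization.

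Parts~\ref{prop3ii} and~\ref{prop3iii} then follow easily. In the irreducible case $r=1$ we have $\pi_1(F)=0$, and since $F$ is a smooth affine surface it has the homotopy type of a $2$-dimensional CW complex; a simply connected $2$-complex has free $H_2$ and, after minimalization, is homotopy equivalent to a wedge of $2$-spheres. For part~\ref{prop3iii}, the triviality of $h_*$ on $H_1(F,\Z)$ dualizes to the triviality of $h^*$ on $H^1(F,\Z)=\Z^{r-1}$, whence $\Delta_1(C)(t)=\det(t\,\id-\id)=(t-1)^{r-1}$.

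The only delicate point I anticipate is the clean identification of $h_*$ on $\pi_1(F)$ with conjugation by a lift in $\pi_1(U)$; this is a classical property of regular coverings but deserves a careful choice of basepoint together with a connecting path between $\tilde{x}_0$ and $h(\tilde{x}_0)$ to make the identification rigorous. Once this is granted, everything else reduces to assembling the Wang sequence, the Milnor fibration exact sequence, and elementary homotopy theory of $2$-complexes.
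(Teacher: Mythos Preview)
Your argument is correct, but it is organized quite differently from the paper's. You put the monodromy triviality first: using that deck transformations act on $\pi_1(F)\subset\pi_1(U)$ by conjugation in $\pi_1(U)$, you get $h_*=\id$ on $H_1(F,\Z)$ immediately, and then feed this into the Wang sequence (together with the known $H_1(\C^3\setminus Y,\Z)=\Z^r$) to obtain $H_1(F,\Z)=\Z^{r-1}$; the Milnor fibration $F\to\C^3\setminus Y\to\C^*$ then gives $\pi_1(\C^3\setminus Y)$ as a split extension with trivial action, hence $\Z^r$. The paper instead computes $\pi_1(F)$ directly as the kernel of the explicit map $\varphi\colon H_1(U,\Z)\to\Z/d$ (using the meridian presentation of $H_1(U,\Z)$), handles $\pi_1(\C^3\setminus Y)$ via the $S^1$-bundle $S^1\to S^5\setminus K\to U$ and a rank argument on the commutator subgroup, and only at the end deduces the monodromy triviality on $H^1(F,\Q)$ by comparing $\dim H^1(F,\Q)$ with $\dim H^1(U,\Q)$. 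Your route is more unified (one observation drives all three parts) and makes \ref{prop3iii} a by-product of \ref{prop3i}; the paper's route is more hands-on for \ref{prop3i}, never needing the covering-space conjugation fact, at the cost of separate arguments for each piece. Parts \ref{prop3ii} are identical in both approaches.
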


\begin{proof}
Recall that if $C$ has $r$ irreducible components of degrees $d_1,\dots,d_r$, then
$
\pi_1(U)=H_1(U,\mathbb{Z})
$
is generated by meridians $\mu_1,\dots,\mu_r$, subject to the relation $d_1\mu_1+\dots+d_r\mu_r =0$.
The $d$-fold cyclic covering $p: F \to U$ is determined by the monodromy morphism $\varphi:H_1(U,\mathbb{Z})\to\mathbb{Z}/d$
defined by $\mu_j\mapsto 1\bmod d$. Since there is a  monomorphism $p_{\sharp}: \pi_1(F) \to \pi_1(U)$, we have that
$\pi_1(F)$ is abelian and $\pi_1(F)=H_1(F,\Z)$; moreover, it is isomorphic to $\ker\varphi$ which is clearly
isomorphic to $\mathbb{Z}^{r-1}$.

By \cite[Proposition (3.2.3) on p. 76]{D1}  
we have an exact sequence 
$$ 1 \to [ \pi_1(X), \pi_1(X)] \to \pi_1(F) \to \Z^{r-1} \to 0$$
where $X=S^5 \setminus K$, with $K$ the corresponding link, as in \S\ref{sec:zariski}. On the other hand, we have the obvious $S^1$-bundle
$$ S^1 \to X \to U$$
inducing the exact sequence
$$ 0\to \mathbb{Z}\equiv\pi_1(S^1) \to \pi_1(X) \to \pi_1(U) \to 0,$$
which starts at~$0$ because the first map is injective, using $f:X\to\mathbb{C}^*$.

Since $\pi_1(U)$ is abelian, it follows that  the commutator $H= [ \pi_1(X), \pi_1(X)]$ is a subgroup of $\Z$. 
Assume $H \ne 0$; then $H$ is isomorphic to $\Z$ and the rank of $\pi_1(F)=H_1(F,\Z)$ is $r$, a contradiction. 
Hence, $H=0$ and, since $\C^3 \setminus Y$ is homotopically equivalent to $X$, the proof of the \ref{prop3i} is completed.

Part \ref{prop3ii} follows from the fact that $F$ is a 2-dimensional CW-complex which is simply-connected 
when $C$ is irreducible by~\ref{prop3i}.

The claim that the action of the monodromy is the identity on $H^1(F,\Z)$ is 
equivalent to the claim that this action is the identity on $H^1(F,\Q)$, as no torsion is present. 
But $H^1(F,\Q)$ and the fixed part under the monodromy in $H^1(F,\Q)$, which can be identified to $H^1(U,\Q)$, 
have both dimension $r-1$ over $\Q$, hence they coincide.
\end{proof}

\providecommand{\bysame}{\leavevmode\hbox to3em{\hrulefill}\thinspace}
\providecommand{\MR}{\relax\ifhmode\unskip\space\fi MR }
\providecommand{\MRhref}[2]{%
  \href{http://www.ams.org/mathscinet-getitem?mr=#1}{#2}
}
\providecommand{\href}[2]{#2}

\end{document}